\documentclass[a4paper,reqno]{amsart}

\usepackage[utf8]{inputenc}
%\usepackage[T1]{fontenc}
%\usepackage{ae,aecompl}
%\makeglossary
%\usepackage{enumerate}
%\usepackage{latexsym}
\usepackage{amsmath,amssymb} %,amsthm,amsfonts}

\usepackage{csquotes}
\usepackage{xcolor}
\usepackage{mathrsfs}
\usepackage{bbm}
%\usepackage{stmaryrd}

% Environments
% \newtheoremstyle{thm}{25pt}{10pt}{\itshape}{}{\bfseries}{}{\newline}{}
% \newtheoremstyle{def}{25pt}{10pt}{}{}{\bfseries}{}{\newline}{}
\theoremstyle{plain}
\newtheorem{theorem}{Theorem}[section]
\newtheorem*{theorem*}{Theorem}
\newtheorem{corollary}[theorem]{Corollary}
\newtheorem{proposition}[theorem]{Proposition}
\newtheorem{lemma}[theorem]{Lemma}
\theoremstyle{definition}

\newtheorem*{definition*}{Definition}
\newtheorem*{remark}{Remark}
% \newenvironment{environmentNum}[2]
% {
%     \def\envName{#1}
%     \renewcommand{\thetheorem}{#2}
%     \begin{\envName}
%     }
%     {
%     \end{\envName}
%     \addtocounter{theorem}{-1}
% }
% \newenvironment{equationNum}[1]
% {
%     \renewcommand{\theequation}{#1}
%     \begin{equation}
% }
% {
%     \end{equation}
%     \addtocounter{equation}{-1}
% }

% Math Symbols

\newcommand{\RR}{\mathbb{R}}
\newcommand{\CC}{\mathbb{C}}

\newcommand{\NN}{\mathbb{N}}
\newcommand{\D}{\mathcal{D}}
\newcommand{\N}{\mathcal{N}}
\newcommand{\R}{\mathcal{R}}

\newcommand{\DDV}{{\Delta_\mathcal{D}^V}}
\newcommand{\DNV}{{\Delta_\mathcal{N}^V}}
\newcommand{\DRV}{{\Delta_\mathcal{R}^V}}

\newcommand{\aDV}{{\mathfrak{a}_\mathcal{D}^V}}
\newcommand{\aNV}{{\mathfrak{a}_\mathcal{N}^V}}
\newcommand{\aRV}{{\mathfrak{a}_\mathcal{R}^V}}
\newcommand{\normalderiv}[2][\partial\Omega]{{\mathchoice%
    {\frac{\partial #2}{\partial\nu}\Big|_{#1}}
    {\frac{\partial #2}{\partial\nu}\big|_{#1}}
    {\frac{\partial #2}{\partial\nu}|_{#1}}
    {\frac{\partial #2}{\partial\nu}|_{#1}}
}}

\newcommand{\norm}[1]{{\left\vert\kern-0.25ex\left\vert #1 \right\vert\kern-0.25ex\right\vert}}
\newcommand{\scalar}[2]{{\left(#1,#2\right)}}
\newcommand{\duality}[2]{{\left<#1,#2\right>}}
\newcommand{\LTwoScalar}[3]{{\left(#1,#2\right)_{L^2(#3)}}}

\renewcommand{\Re}{\textnormal{Re}}

\renewcommand{\emptyset}{\varnothing}

\newcommand{\cA}{\mathcal{A}}
\newcommand{\cL}{\mathcal{L}}
\newcommand{\cO}{\mathcal{O}}
\newcommand{\fa}{\mathfrak{a}}

\DeclareMathOperator{\dom}{dom}
\DeclareMathOperator{\ran}{ran}

\numberwithin{equation}{section}

\title[]{Eigenvalue inequalities for Schrödinger operators on unbounded Lipschitz domains}

\author{Jussi Behrndt}
\address{Institut f\"ur Numerische Mathematik \\
    Technische Universit\"at Graz \\
    Steyrergasse 30\\
    A-8010 Graz\\
    Austria}

\author{Jonathan Rohleder}

\author{Simon Stadler}

\email{behrndt@tugraz.at, rohleder@tugraz.at, simon.stadler@student.tugraz.at}

\keywords{Eigenvalue inequality, Schrödinger operator, Dirichlet, Neumann and Robin boundary condition, unbounded Lipschitz domain, elliptic differential operator}

\begin{document}
\begin{abstract}
Given a Schr\"odinger differential expression on an exterior Lipschitz domain we prove strict inequalities between the eigenvalues of the corresponding 
selfadjoint operators subject to Dirichlet and Neumann or Dirichlet and mixed boundary conditions, respectively. Moreover, we prove a strict inequality 
between the eigenvalues of two different elliptic differential operators on the same domain with Dirichlet boundary conditions.
% \\[5pt]
% {\it \keywordsname:}\\
\end{abstract}
\maketitle
    
%%%%%%%%%%%%%%%%%%%%%%%%%%%%%%%%%%%%%%%%%%%%%%%%%%%%%%%%%%%%%%%%%%%%%%%%
%----------------------------------------------------------------------%
%----------------------------------------------------------------------% %----------------------------------------------------------------------%
%%%%%%%%%%%%%%%%%%%%%%%%%%%%%%%%%%%%%%%%%%%%%%%%%%%%%%%%%%%%%%%%%%%%%%%%
    
\section{Introduction}

In the spectral theory of Laplace and Schr\"odinger operators eigenvalue inequalities have a long history, see, e.g.,~\cite{AB07} for a survey. 
One extensively studied topic is the relation between Dirichlet and Neumann eigenvalues for the Laplace equation on a bounded domain; we refer 
the reader to the classical works~\cite{P55, P52, S54} and the more recent contributions~\cite{AM12,F05,F91,F95,GM09,LW86,M91,R14,S08}. 

In this note we focus on eigenvalue 
inequalities for Schr\"odinger operators 
on exterior domains with Dirichlet, Neumann, and Robin boundary conditions. As a special case consider first the selfadjoint Schr\"odinger operators
\begin{align*}
 - \DDV u = - \Delta u + V u, \quad \dom (- \DDV) = \big\{ u \in H^1 (\Omega): \Delta u \in L^2 (\Omega), u |_{\partial \Omega} = 0 \big\},
\end{align*}
and
\begin{align*}
 - \DNV u = - \Delta u + V u, \quad \dom (- \DNV) = \Big\{ u \in H^1 (\Omega): \Delta u \in L^2 (\Omega), \frac{\partial u}{\partial \nu} \Big|_{\partial \Omega} = 0 \Big\},
\end{align*}
in $L^2 (\Omega)$ on an exterior Lipschitz domain $\Omega \subset \RR^d$, $d \geq 2$, with a bounded, measurable potential 
$V : \Omega \to \RR$; here $u |_{\partial \Omega}$ and $\frac{\partial u}{\partial \nu} |_{\partial \Omega}$ are the trace and the normal derivative of 
a function $u\in H^1(\Omega)$, respectively. 
The essential spectra of $- \DDV$ and $- \DNV$ coincide and, depending 
on the form of the potential $V$, there may exist finitely or infinitely many eigenvalues below the bottom of the essential spectrum. We denote these eigenvalues by
\begin{align*}
 \lambda_1^\D \leq \lambda_2^\D \leq \dots \quad \text{and} \quad \lambda_1^\N \leq \lambda_2^\N \leq \dots,
\end{align*}
respectively, if they are present. It follows immediately from variational principles that if $- \DDV$ possesses (at least) 
$l$ eigenvalues below the bottom of the essential spectrum then the same is true for $- \DNV$ and the inequality
\begin{align}\label{eq:langweiligeUngleichung}
 \lambda_k^\N \leq \lambda_k^\D, \qquad k \in \{1, \dots, l\},
\end{align}
holds. As a special case of the main result in Section~\ref{sec:Neumann} it turns out that the inequality~\eqref{eq:langweiligeUngleichung} is in fact strict, i.e.,
\begin{align}\label{eq:spannendeUngleichung}
 \lambda_k^\N < \lambda_k^\D, \qquad k \in \{1, \dots, l\}.
\end{align}
Our proof of~\eqref{eq:spannendeUngleichung} is based on an idea by Filonov in~\cite{F05} who showed an inequality for the eigenvalues of Dirichlet 
and Neumann Laplacians on a bounded domain. Its adaption to the present situation makes use of a unique continuation principle.

In fact, the inequality~\eqref{eq:spannendeUngleichung} appears as a special case of a more general result. Instead of restricting ourselves to the case of the Neumann operator $- \DNV$ we consider the selfadjoint 
operator $- \DRV$ satisfying a mixed boundary condition, namely a Robin boundary condition on a relatively open part $\omega$ of the boundary $\partial\Omega$,
\begin{align*}
 \alpha u |_\omega + \frac{\partial u}{\partial \nu} \Big|_\omega = 0
\end{align*}
for some $\alpha \in \RR$, and a Dirichlet boundary condition on the complement $\omega' = \partial \Omega \setminus \omega$. The essential spectra of $- \DDV$ and $- \DRV$ 
coincide (see Section~\ref{sec:preliminaries}) and it turns out that whenever $\omega$ is nonempty the analog of~\eqref{eq:spannendeUngleichung} for this situation is true, i.e.,
\begin{align}\label{eq:spannendeUngleichung2}
 \lambda_k^\R < \lambda_k^\D, \qquad k \in \{1, \dots, l\},
\end{align}
holds, where $\lambda_1^\R \leq \lambda_2^\R \leq \dots$ are the eigenvalues of $- \DRV$ below the bottom of the essential spectrum. The inequality~\eqref{eq:spannendeUngleichung} 
follows from~\eqref{eq:spannendeUngleichung2} setting $\omega = \partial \Omega$ and $\alpha = 0$. We remark that eigenvalue inequalities for Robin Laplacians on 
bounded domains can be found in the literature in, e.g.,~\cite{GM09,R14}.

In Section~\ref{sec:elliptic} we complement our result with an inequality for elliptic differential operators subject to Dirichlet boundary conditions with different sets of 
coefficients. For the special case of Schr\"odinger operators the result reads as follows: 
Assume that $V_1, V_2 : \Omega \to \RR$ are two bounded, measurable potentials with $V_1 (x) \leq V_2 (x)$ for all $x \in \Omega$ such that the bottoms of the essential spectra 
of $- \Delta_\D^{V_1}$ and $- \Delta_\D^{V_2}$ coincide. If, in addition, $V_1 (x) < V_2 (x)$ for all $x$ in some open ball then 
\begin{align}\label{eq:spannend?}
 \lambda_k^{\D, V_1} < \lambda_k^{\D, V_2}, \quad k \in \{1, \dots, l\},
\end{align}
whenever $- \Delta_\D^{V_2}$ (and then also $- \Delta_\D^{V_1}$) has at least $l$ eigenvalues below the bottom of the essential spectrum. The method to prove 
this observation is in line with the proofs in the previous section. We remark that for~\eqref{eq:spannend?} no regularity of the boundary of $\Omega$ is required; also the case $\Omega = \RR^d$ is included, where no boundary condition is present any more.

\subsection*{Acknowledgements}

Jussi Behrndt and Jonathan Rohleder gratefully acknowledge financial support by the Austrian Science Fund (FWF): Project P~25162-N26. The authors wish to thank Mark Ashbaugh and Fritz Gesztesy for helpful remarks and literature hints.

%%%%%%%%%%%%%%%%%%%%%%%%%%%%%%%%%%%%%%%%%%%%%%%%%%%%%%%%%%%%%%%%%%%%%%%%
%----------------------------------------------------------------------%
%----------------------------------------------------------------------%
%----------------------------------------------------------------------%
%%%%%%%%%%%%%%%%%%%%%%%%%%%%%%%%%%%%%%%%%%%%%%%%%%%%%%%%%%%%%%%%%%%%%%%%
\section{Schrödinger operators with Dirichlet, Neumann, and Robin boundary conditions on exterior Lipschitz domains}\label{sec:preliminaries}
    
In this preparatory section we provide some preliminaries and discuss properties of Schrödinger operators with Dirichlet, Neumann, and Robin boundary conditions on exterior Lipschitz domains.
    
We assume here and in the following sections that $\Omega \subset \RR^d$, $d\geq 2$, is an unbounded open set with a compact Lipschitz boundary, i.e., $\RR^d \setminus \overline{\Omega}$ is a bounded Lipschitz domain. We require for convenience that, in addition, $\Omega$ is connected. We denote the standard Sobolev spaces on $\Omega$ and on the boundary $\partial\Omega$ by $H^s(\Omega)$, $s\in\RR$, and $H^s(\partial\Omega)$, $s\in [-1,1]$, respectively. Recall that the mapping 
\begin{equation*}
 C_0^\infty(\overline{\Omega}) \ni u \mapsto u|_{\partial\Omega}
\end{equation*}
can be extended by continuity to a bounded, surjective operator from $H^1(\Omega)$ to $H^{1/2}(\partial\Omega)$. We will use the notation $u|_{\partial\Omega}$ for the trace of $u \in H^1(\Omega)$ and we set
\begin{equation}\label{eq:H01}
 H^1_0(\Omega) := \bigl\{u\in H^1(\Omega) : u|_{\partial\Omega} = 0\bigr\}.
\end{equation}
Note that $H_0^1 (\Omega)$ coincides with the closure of $C_0^\infty (\Omega)$ in $H^1 (\Omega)$. For $u \in H^1(\Omega)$ such that $\Delta u \in L^2(\Omega)$ holds in the distributional sense the normal derivative $\normalderiv{u}$ 
is the uniquely defined element in $H^{-1/2}(\Omega)$ which satisfies Green's identity
\begin{equation}\label{eq:Green1}
 \scalar{\nabla u}{\nabla v}_{(L^2(\Omega))^d} = \LTwoScalar{-\Delta u}{v}{\Omega} + \duality{\normalderiv{u}}{v|_{\partial\Omega}}
\end{equation}
for all $v \in H^1(\Omega)$; here $(\cdot, \cdot)_{L^2 (\Omega)}$ and $(\cdot, \cdot)_{(L^2 (\Omega))^d}$ denote the inner products in $L^2 (\Omega)$ and $(L^2 (\Omega))^d$, respectively, and $\langle\cdot,\cdot\rangle$ denotes the sesquilinear duality between $H^{1/2}(\partial\Omega)$ and its dual space $H^{-1/2}(\partial\Omega)$. For the consideration of Schr\"odinger operators with mixed boundary conditions assume that $\omega$ is an open, nonempty subset of $\partial\Omega$ and set $\omega^\prime=\partial\Omega\setminus\omega$. For a function $u\in H^1(\Omega)$ we shall denote by $u |_\omega$ and $u\vert_{\omega^\prime}$ the restriction of the trace $u\vert_{\partial\Omega}$ to $\omega$ and $\omega^\prime$, respectively.

In order to introduce Schr\"{o}dinger operators with Dirichlet,  Neumann, and Robin boundary conditions let $V \in L^\infty(\Omega)$ be a real valued function and let $\alpha\in\RR$. The sesquilinear forms    
\begin{equation*}
\begin{split}
 \aDV(u,v) &= \bigl( \nabla u, \nabla v \bigr)_{(L^2(\Omega))^d} + ( Vu, v )_{L^2(\Omega)}, \\
 \dom\aDV &= H^1_0(\Omega),
\end{split}
 \end{equation*}
and
\begin{equation*}
\begin{split}
 \aRV(u,v) &= \bigl( \nabla u, \nabla v \bigr)_{(L^2(\Omega))^d} + ( Vu, v )_{L^2(\Omega)} + \alpha \bigl(u\vert_{\partial\Omega},v\vert_{\partial\Omega}\bigr)_{L^2(\partial\Omega)},\\
 \dom\aRV & = \bigl\{ u\in H^1(\Omega): u\vert_{\omega^\prime}=0\bigr\},
\end{split}
\end{equation*}
in $L^2(\Omega)$ are both densely defined, symmetric, bounded from below and closed. The corresponding semibounded, selfadjoint operators in $L^2(\Omega)$ will be denoted by $-\DDV$ and $-\DRV$ and are given by
\begin{equation}\label{dd}
\begin{split}
 -\DDV u &= -\Delta u + V u,\\
 \dom(-\DDV) & = \bigl\{u\in H^1(\Omega):\Delta u \in L^2(\Omega), u|_{\partial\Omega} = 0\bigr\},
\end{split}
\end{equation}
and
\begin{equation}\label{rr}
\begin{split}
 -\DRV u &= -\Delta u + V u,\\
 \dom (-\DRV) &= \biggl\{u\in H^1(\Omega):\Delta u \in L^2(\Omega), \alpha u\vert_{\omega} + \frac{\partial u}{\partial\nu}\Bigl|_{\omega}=0,\,u\vert_{\omega'}=0 \biggr\},
\end{split}
\end{equation}
respectively; cf., e.g.,~\cite[Chapter VII]{EE87}; here the local Robin condition for the functions in the domain of $-\DRV$ is understood in the distributional sense, namely 
$$
 \alpha u\vert_{\omega} + \frac{\partial u}{\partial\nu}\Bigl|_{\omega}=0
$$
if and only if 
$$
 \left\langle \alpha u\vert_{\partial\Omega} + \frac{\partial u}{\partial\nu}\Bigl|_{\partial\Omega},\varphi\right\rangle=0
$$
for all $\varphi\in H^{1/2}(\partial\Omega)$ such that $\varphi |_{\omega'} = 0$. The operators $- \DDV$ and $- \DRV$ satisfy the relations
\begin{align}\label{eq:darstellungDirichlet}
 \aDV (u, v) = (- \DDV u, v)_{L^2 (\Omega)}, \quad u \in \dom (- \DDV),\,\,\, v \in \dom\aDV ,
\end{align}
and
\begin{align}\label{eq:darstellungRobin}
 \aRV (u, v) = (- \DRV u, v)_{L^2 (\Omega)}, \quad u \in \dom (- \DRV),\,\,\, v \in \dom\aRV ,
\end{align}
which follow from Green's identity~\eqref{eq:Green1}. Note that in the special case $\omega = \partial \Omega$ and $\alpha = 0$ the sesquilinear form $\aRV$ coincides with the Neumann form
\begin{equation*}
\begin{split}
 \aNV(u,v) &= \bigl( \nabla u, \nabla v \bigr)_{(L^2(\Omega))^d} + ( Vu, v )_{L^2(\Omega)} ,\\
 \dom\aNV &= H^1(\Omega),
 \end{split}
\end{equation*}
and the corresponding selfadjoint operator is given by the Neumann operator
\begin{equation}\label{nn}
\begin{split}
 -\DNV u &= -\Delta u + V u,\\
 \dom(-\DNV) &= \biggl\{u\in H^1(\Omega):\Delta u \in L^2(\Omega), \frac{\partial u}{\partial\nu}\Bigl|_{\partial\Omega}= 0\biggr\}.
\end{split}
\end{equation}

The following useful proposition is known for exterior domains with smooth boundaries and $-\DNV=-\DRV$. For the convenience of 
the reader we provide a proof in the present more general situation.

\begin{proposition}\label{prop:ess_specs_coincide}
The essential spectra of $- \DDV$ and $- \DRV$ coincide.
\end{proposition}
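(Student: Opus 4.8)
The plan is to prove that the two semibounded selfadjoint operators have the same essential spectrum by showing that a suitable resolvent difference is compact and then invoking Weyl's theorem on the stability of the essential spectrum. Since both $-\DDV$ and $-\DRV$ are bounded from below, one may fix a real number $z$ lying below the infima of the spectra of both operators; then $z\in\rho(-\DDV)\cap\rho(-\DRV)$ and it suffices to verify that
$$
 (-\DRV-z)^{-1}-(-\DDV-z)^{-1}
$$
is a compact operator in $L^2(\Omega)$. The guiding principle is that $-\DDV$ and $-\DRV$ differ only through the boundary condition imposed on the \emph{compact} set $\partial\Omega$, so that their resolvent difference should factor through operators acting on the boundary, where Rellich's theorem is available.

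To make this precise I would introduce the solution (Poisson) operator $\gamma(z)$ associated with the expression $-\Delta+V-z$: for boundary data $\varphi\in H^{1/2}(\partial\Omega)$ let $\gamma(z)\varphi\in H^1(\Omega)$ be the unique function satisfying $(-\Delta+V-z)\gamma(z)\varphi=0$ and $(\gamma(z)\varphi)|_{\partial\Omega}=\varphi$. Because $z$ lies below the essential spectrum, the homogeneous solutions decay at infinity, so $\gamma(z)$ is bounded from $H^{1/2}(\partial\Omega)$ into $L^2(\Omega)$; its adjoint $\gamma(z)^*$ is then bounded from $L^2(\Omega)$ into $H^{-1/2}(\partial\Omega)$. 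Together with the Dirichlet-to-Neumann map $M(z)\colon\varphi\mapsto\frac{\partial(\gamma(z)\varphi)}{\partial\nu}\big|_{\partial\Omega}$ and Green's identity~\eqref{eq:Green1}, one obtains a Krein-type resolvent formula of the form
$$
 (-\DRV-z)^{-1}-(-\DDV-z)^{-1}=\gamma(z)\,\Theta(z)^{-1}\,\gamma(z)^*,
$$
where $\Theta(z)$ is the boundary operator encoding the Robin condition on $\omega$ and the Dirichlet condition on $\omega'$; in the special case $\omega=\partial\Omega$, $\alpha=0$ it reduces to $\Theta(z)=-M(z)$.

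The compactness is then harvested entirely on the boundary. The operator $\Theta(z)^{-1}$ maps $H^{-1/2}(\partial\Omega)$ boundedly into $H^{1/2}(\partial\Omega)$, since the Dirichlet-to-Neumann map is elliptic of order one and hence its (shifted) inverse smooths by one order. As $\partial\Omega$ is compact, the embedding $H^{1/2}(\partial\Omega)\hookrightarrow H^{-1/2}(\partial\Omega)$ is compact by Rellich's theorem, so $\Theta(z)^{-1}$, regarded as an operator on $H^{-1/2}(\partial\Omega)$, is compact. Sandwiching it between the bounded operators $\gamma(z)^*\colon L^2(\Omega)\to H^{-1/2}(\partial\Omega)$ and $\gamma(z)\colon H^{-1/2}(\partial\Omega)\to L^2(\Omega)$ shows that the resolvent difference is compact, and Weyl's theorem yields $\sigma_{\mathrm{ess}}(-\DDV)=\sigma_{\mathrm{ess}}(-\DRV)$.

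I expect the main obstacle to be the limited regularity available on a Lipschitz boundary. Because $\Omega$ is unbounded, no compact Sobolev embedding on $\Omega$ itself is at one's disposal, so all compactness must be extracted from the compact boundary; this forces one to control the mapping properties of $\gamma(z)$ and $M(z)$ carefully — in particular the boundedness of $\gamma(z)$ from spaces rougher than $H^{1/2}(\partial\Omega)$ into $L^2(\Omega)$, and the fact that the normal derivative is only defined as an element of $H^{-1/2}(\partial\Omega)$ via~\eqref{eq:Green1}. The mixed Robin--Dirichlet splitting introduces a further difficulty at the interface between $\omega$ and $\omega'$, where the boundary operator $\Theta(z)$ is less regular; one must ensure that enough smoothing survives to keep $\Theta(z)^{-1}$ compact on $H^{-1/2}(\partial\Omega)$. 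Verifying that $z$ below the essential spectrum indeed makes $\gamma(z)$ land in $L^2(\Omega)$, rather than merely in $H^1_{\mathrm{loc}}(\Omega)$, is the remaining point requiring care.
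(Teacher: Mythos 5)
Your overall strategy --- factor the resolvent difference through operators on the compact boundary $\partial\Omega$ and extract compactness from Rellich's theorem there --- is exactly the strategy of the paper. But the route you choose to implement it, via a Krein-type resolvent formula with a Dirichlet-to-Neumann map $M(z)$ and a boundary parameter $\Theta(z)$, leaves the decisive steps unproved, and you yourself flag them without resolving them. Two points are genuine gaps. First, the Krein formula $(-\DRV-z)^{-1}-(-\DDV-z)^{-1}=\gamma(z)\Theta(z)^{-1}\gamma(z)^*$ for a \emph{mixed} Robin--Dirichlet condition is not off-the-shelf: the boundary operator encoding a condition that switches between $\omega$ and $\omega'$ does not act naturally on the full scale $H^{\pm 1/2}(\partial\Omega)$, and setting this up rigorously on a Lipschitz boundary is itself a substantial task. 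Second, and more seriously, the claim that $\Theta(z)^{-1}$ maps $H^{-1/2}(\partial\Omega)$ boundedly into $H^{1/2}(\partial\Omega)$ because ``the Dirichlet-to-Neumann map is elliptic of order one'' is a full order of smoothing; on a Lipschitz boundary, and a fortiori across the interface between $\omega$ and $\omega'$ where solutions lose regularity, this is precisely the hard analytic content, not a consequence of formal ellipticity. As it stands the argument is circular at this point: the compactness you want is equivalent to the smoothing you assume.

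The paper's proof shows how to get away with much less. Writing $S u = \normalderiv{\bigl((-\DDV-\lambda)^{-1}u\bigr)}$ and $T v = \bigl((-\DRV-\overline\lambda)^{-1}v\bigr)|_{\partial\Omega}$, a direct computation with Green's identity~\eqref{eq:Green1} gives $(-\DDV-\lambda)^{-1}-(-\DRV-\lambda)^{-1}=T^*S$ --- no Poisson operator, no Weyl function, and no information about the boundary operator of the mixed condition is needed, since the Robin resolvent enters only through its trace $T$, which is bounded into $H^{1/2}(\partial\Omega)$ for free. All the compactness is then harvested from the \emph{Dirichlet} side: one only has to show that $S$ actually maps into $L^2(\partial\Omega)$ rather than merely $H^{-1/2}(\partial\Omega)$, a gain of half an order obtained by localizing near the boundary with a cutoff and invoking the $H^{3/2}$-regularity theorem of Jerison--Kenig together with a result of Gesztesy--Mitrea on normal traces. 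The compact embedding $L^2(\partial\Omega)\hookrightarrow H^{-1/2}(\partial\Omega)$ then makes $S$, hence $T^*S$, compact. If you want to salvage your approach, you would need to either prove the mapping property of $\Theta(z)^{-1}$ (essentially redoing the Jerison--Kenig step in disguise, plus handling the $\omega$/$\omega'$ interface) or switch to the asymmetric factorization $T^*S$, which deliberately places all the regularity burden on the operator with the pure Dirichlet condition.
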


\begin{proof}
Let $\lambda \in \CC \setminus \RR$ and consider the operators 
\begin{equation*}
 S : L^2 (\Omega) \to H^{- 1/2} (\partial \Omega),\qquad u\mapsto\frac{\partial}{\partial\nu} \big( (- \DDV - \lambda)^{-1} u \big) \Big|_{\partial \Omega},
\end{equation*}
and 
\begin{equation*}
 T : L^2 (\Omega) \to H^{1/2} (\partial \Omega),\qquad u\mapsto \big( (- \DRV - \overline{\lambda})^{-1} u \big) |_{\partial \Omega}.
\end{equation*}
It follows from the continuity of the trace and the normal derivative that both operators $S$ and $T$ are bounded. Moreover, we claim that $\ran S \subset L^2 (\partial \Omega)$ holds. Indeed, let $u \in L^2 (\Omega)$ and choose an open ball $B \subset \RR^d$ such that $\partial\Omega\subset B$. Then  $\Omega_0 := B \cap \Omega$ is a bounded Lipschitz domain with $\partial \Omega \subset \partial \Omega_0$. Let $\chi \in C^\infty (\Omega)$ be a function with $\chi = 1$ identically in a neighborhood of $\partial \Omega$ and $\chi = 0$ outside $\Omega_0$. Then the function $u_0 = (\chi (- \DDV - \lambda)^{-1} u)|_{\Omega_0}$ belongs to $H^1_0 (\Omega_0)$ and $\Delta u_0 \in L^2 (\Omega_0)$ holds. It follows from~\cite[Theorem~B]{JK95} that $u_0 \in H^{3/2} (\Omega_0)$. With the help of~\cite[Lemma~3.2]{GM11} we further conclude $\normalderiv[\partial \Omega_0]{u_0} \in L^2 (\partial \Omega_0)$. In particular,
\begin{equation*}
\begin{split}
 S u &= \frac{\partial}{\partial\nu} \big( (- \DDV - \lambda)^{-1} u \big) \Big|_{\partial \Omega}\\
 &= \frac{\partial}{\partial\nu} \big( \chi (- \DDV - \lambda)^{-1} u \big) \Big|_{\partial \Omega} + \frac{\partial}{\partial\nu} \big( (1-\chi) (- \DDV - \lambda)^{-1} u \big) \Big|_{\partial \Omega}\\
 &=   \normalderiv{u_0}
\end{split}
\end{equation*}
and hence $\ran S\subset L^2(\partial\Omega)$. By the closed graph theorem $S$ considered as an operator from $L^2 (\Omega)$ to $L^2 (\partial \Omega)$ is bounded. Since the embedding of $L^2 (\partial \Omega)$ into $H^{- 1/2} (\partial \Omega)$ is compact, it follows that $S : L^2 (\Omega) \to H^{- 1/2} (\partial \Omega)$ is compact.

Let now $u, v \in L^2 (\Omega)$ and define
\begin{align*}
 f=(- \DDV - \lambda)^{-1} u\qquad\text{and}\qquad g=(- \DRV - \overline \lambda)^{-1} v.
\end{align*}
Then we obtain with the help of~\eqref{eq:Green1} and $f\vert_{\partial\Omega}=0$
\begin{equation*}
\begin{split}
 \bigl((- \DDV - \lambda)^{-1} u - (&- \DRV - \lambda)^{-1} u, v \bigr)_{L^2(\Omega)} \\
 & = (f , v )_{L^2(\Omega)} - (u , g )_{L^2(\Omega)} \\
 & = \bigl(f ,(- \DRV - \overline \lambda)g \bigr)_{L^2(\Omega)} - \bigl((- \DDV - \lambda)f , g \bigr)_{L^2(\Omega)} \\
 & = \bigl(f ,-\Delta g \bigr)_{L^2(\Omega)}  - \bigl(-\Delta f , g \bigr)_{L^2(\Omega)} \\
 & = \left\langle\normalderiv{f},g\vert_{\partial\Omega}\right\rangle - \left\langle f\vert_{\partial\Omega},\normalderiv{g}\right\rangle\\
 & = \left\langle\frac{\partial}{\partial\nu}\bigl((- \DDV - \lambda)^{-1} u\bigr) \Big|_{\partial\Omega}, \bigl((- \DRV - \overline \lambda)^{-1} v\bigr)\vert_{\partial\Omega}\right\rangle\\
 & = \langle Su,Tv\rangle
\end{split}
\end{equation*}
and hence
\begin{align}\label{eq:resDiff}
 (- \DDV - \lambda)^{-1} - (- \DRV - \lambda)^{-1} = T^* S.
\end{align}
As $S$ is compact and $T^*$ is bounded it follows that $T^* S$ and thus the left-hand side of~\eqref{eq:resDiff} is compact. Hence the essential spectra of $- \DDV$ and $- \DRV$ coincide.
\end{proof}

\section{A strict inequality between Dirichlet and Robin eigenvalues}\label{sec:Neumann}
    
This section contains the first main result of this note. In Theorem~\ref{thm:filonov_unbeschraenkt} we shall prove a strict inequality between the eigenvalues below the essential spectrum of the Schr\"{o}dinger operators with Dirichlet and Robin boundary conditions given in \eqref{dd} and \eqref{rr}, respectively. Throughout this section $\Omega \subset \RR^d$, $d\geq 2$, is an unbounded, connected Lipschitz domain with a compact boundary and $V\in L^\infty(\Omega)$ is a real valued function. 

The following preparatory lemma is the counterpart of the lemma in~\cite{F05}, where the Laplacian on a bounded Lipschitz domain with Neumann boundary conditions was considered. In contrast to the situation in~\cite{F05}, a unique continuation principle must be employed in the proof. For the convenience of the reader we carry out the details.

\begin{lemma}\label{lemma:filonov_hilfslemma}
Let $-\DRV$ be given as in \eqref{rr} and let $\mu\in\RR$. Then
\begin{equation*}
 H^1_0(\Omega) \cap \ker\left( -\DRV - \mu \right) = \{0\}.
\end{equation*}
\end{lemma}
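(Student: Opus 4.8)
The plan is to show that any $u$ in this intersection has vanishing Cauchy data on $\omega$ and then to propagate the vanishing through the whole domain by a unique continuation principle. So let $u\in H^1_0(\Omega)\cap\ker(-\DRV-\mu)$. Then $u\in\dom(-\DRV)$ solves $-\Delta u+Vu=\mu u$, while $u|_{\partial\Omega}=0$ forces $u|_\omega=0$; together with the distributional Robin condition $\alpha u|_\omega+\normalderiv[\omega]{u}=0$ this yields $\normalderiv[\omega]{u}=0$ as well. More precisely, I would record that for every $\varphi\in H^{1/2}(\partial\Omega)$ with $\varphi|_{\omega'}=0$ one has $\duality{\normalderiv{u}}{\varphi}=0$, since the Robin term $\alpha u|_{\partial\Omega}$ vanishes.

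Next I fix a point $x_0\in\omega$ and choose an open ball $B$ centred at $x_0$ so small that $B\cap\partial\Omega\subset\omega$ (possible because $\omega$ is relatively open in $\partial\Omega$); in particular $B\setminus\overline{\Omega}$ is a nonempty open set. I then define the zero extension
\[
 \tilde u=\begin{cases}u & \text{in } B\cap\Omega,\\ 0 & \text{in } B\setminus\overline{\Omega}.\end{cases}
\]
Because the traces of $u$ and of $0$ agree on $B\cap\partial\Omega\subset\omega$ (both vanish), the function $\tilde u$ belongs to $H^1(B)$. I claim $\tilde u$ is a weak solution of $-\Delta\tilde u+(V-\mu)\tilde u=0$ in $B$, where $V$ is extended arbitrarily (say by $0$) to $B\setminus\overline{\Omega}$. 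Indeed, for $\varphi\in C_0^\infty(B)$ its restriction to $\Omega$ is an admissible test function with trace supported in $\omega$, so Green's identity~\eqref{eq:Green1} over $\Omega$ gives $\scalar{\nabla u}{\nabla\varphi}_{(L^2(\Omega))^d}=(-\Delta u,\varphi)_{L^2(\Omega)}+\duality{\normalderiv{u}}{\varphi|_{\partial\Omega}}$; the boundary pairing vanishes by the previous paragraph and $-\Delta u=(\mu-V)u$, so that, using that $\varphi$ is supported in $B$ and $\tilde u$ vanishes on $B\setminus\overline{\Omega}$, we obtain $\scalar{\nabla\tilde u}{\nabla\varphi}_{(L^2(B))^d}+((V-\mu)\tilde u,\varphi)_{L^2(B)}=0$.

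Finally I would invoke unique continuation. Since $\tilde u\in H^1(B)$ solves the Schrödinger equation with the bounded potential $V-\mu$ and vanishes on the nonempty open set $B\setminus\overline{\Omega}$, unique continuation forces $\tilde u\equiv0$ on the connected set $B$; in particular $u=0$ on the nonempty open subset $B\cap\Omega$ of $\Omega$. Applying unique continuation a second time to $u$, which solves $-\Delta u+(V-\mu)u=0$ on the connected domain $\Omega$ and vanishes on the open set $B\cap\Omega$, yields $u\equiv0$ on $\Omega$, as claimed.

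I expect the main obstacle to be the middle step: verifying that the zero extension across $\omega$ is a genuine $H^1$ weak solution on the full ball $B$. This is exactly where both pieces of Cauchy data enter---$u|_\omega=0$ gives the $H^1$-gluing, and $\normalderiv[\omega]{u}=0$ guarantees that no boundary contribution survives in the weak formulation---so the Green's-identity bookkeeping must be carried out carefully with Lipschitz traces and with test functions whose support meets $\partial\Omega$ only inside $\omega$. The unique continuation principle itself, though the decisive new ingredient compared with~\cite{F05}, is then applied as a black box for Schrödinger operators with bounded potentials.
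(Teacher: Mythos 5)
Your argument is correct and rests on the same mechanism as the paper's own proof: the vanishing Dirichlet trace coming from $H^1_0(\Omega)$ combines with the distributional Robin condition to kill the Neumann trace on $\omega$, the solution is extended by zero across $\omega$, and a unique continuation principle for Schr\"odinger operators with bounded potentials finishes the job. The only genuine difference is the geometry of the extension. The paper extends \emph{globally}: it embeds $\Omega$ into a larger connected Lipschitz domain $\widetilde\Omega$ with $\omega'\subset\partial\widetilde\Omega$ and $\widetilde\Omega\setminus\Omega\neq\emptyset$, verifies that the zero extension $\widetilde v$ satisfies $(-\Delta+\widetilde V)\widetilde v=\mu\widetilde v$ by identifying the distributional pairing against $\widetilde\psi\in C_0^\infty(\widetilde\Omega)$ with $\aRV(v,\overline\psi)$, and then applies unique continuation once on $\widetilde\Omega$. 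You extend \emph{locally} into a small ball $B$ centred at a point of $\omega$ with $B\cap\partial\Omega\subset\omega$, which costs you a second application of unique continuation (first on $B$, then on $\Omega$ starting from the open set $B\cap\Omega$, using connectedness of $\Omega$) but spares you the construction of the auxiliary domain $\widetilde\Omega$ --- a step the paper asserts rather than carries out and which is the only mildly delicate point of its version. Your explicit bookkeeping of the boundary pairing, namely that $\bigl\langle\normalderiv{u},\varphi|_{\partial\Omega}\bigr\rangle=0$ for traces $\varphi$ supported in $\omega$ because $u|_{\partial\Omega}=0$ removes the $\alpha$-term from the Robin condition, is exactly the content of the paper's identity $(-\Delta\widetilde v+\widetilde V\widetilde v)(\widetilde\psi)=\aRV(v,\overline\psi)$, so the two proofs are on an equal footing.
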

    
\begin{proof}
Assume that 
\begin{equation*}
 v\in H^1_0(\Omega)\cap\ker\left( -\DRV-\mu \right)
\end{equation*}
and let $\widetilde\Omega\subset \RR^d$ be an unbounded Lipschitz domain such that 
\begin{equation*}
 \Omega\subset\widetilde\Omega,\qquad \omega'\subset\partial\widetilde\Omega\quad\text{and}\quad \widetilde\Omega\setminus\Omega\not=\emptyset.
\end{equation*}
Consider the function
\begin{equation*}
 \widetilde{v}(x) := 
 \begin{cases}
  v(x), & \textrm{if } x\in\Omega, \\
  0,    & \textrm{if } x\in\widetilde\Omega\setminus\Omega,
 \end{cases}
\end{equation*}
which belongs to $H^1(\widetilde\Omega)$. Let $\widetilde{V}\in L^\infty(\widetilde\Omega)$ be the extension of $V$ by zero to $\widetilde\Omega$. Calculating the action of the distribution $(- \Delta + \widetilde V) \widetilde v$ on $\widetilde \Omega$, for each $\widetilde{\psi} \in C_0^\infty(\widetilde\Omega)$ we have
\begin{align}\label{iii}
\begin{split}
 \bigl( -\Delta \widetilde{v} + \widetilde{V} \widetilde{v}  \bigr) \bigl(\widetilde{\psi}\bigr) & = \sum\limits_{i=1}^{d}\left( \partial_i \widetilde{v} \right)\big( \partial_i \widetilde{\psi} \big) + \big( \widetilde{V}\widetilde{v}\big)\widetilde{\psi}\\
 & = \bigl(\nabla v,\nabla\overline\psi\bigr)_{(L^2(\Omega))^d}+(Vv,\overline\psi)_{L^2(\Omega)} \\
 & = \aRV (v, \overline{\psi}),
\end{split}
\end{align}
where $\psi$ is the restriction of $\widetilde{\psi}$ to $\Omega$ and $v\vert_{\partial\Omega}=0$ was used. Using \eqref{eq:darstellungRobin}, \eqref{iii}, and 
$v\in\ker(-\DRV-\mu)$ we obtain
\begin{equation*}
 \bigl( -\Delta\widetilde{v}+\widetilde{V}\widetilde{v}  \bigr) \bigl(\widetilde{\psi}\bigr) = (-\DRV v,\overline\psi)_{L^2(\Omega)}=(\mu v,\overline\psi)_{L^2(\Omega)}
 =( \mu \widetilde{v} )\bigl( \widetilde{\psi}\bigr),\quad \widetilde{\psi} \in C_0^\infty(\widetilde\Omega),
\end{equation*}
and hence $( -\Delta+\widetilde{V})\widetilde{v} = \mu \widetilde{v} \in L^2(\widetilde\Omega)$. Since $\widetilde v |_{\widetilde \Omega \setminus \Omega} = 0$, unique continuation implies  $\widetilde{v} = 0$ on $\widetilde\Omega$, see, e.g.,~\cite{W93}. Hence $v = 0$ on $\Omega$.
\end{proof}
    
Now we come to the first main result of this note. Its proof is inspired by an idea in~\cite{F05}. First we introduce some useful notation. For an interval $I \subset \RR$ which contains no essential spectrum the eigenvalue counting functions of the Dirichlet and Robin Schrödinger operator are defined by
\begin{equation}\label{nnnm}
 N_\D(I) := \dim \ran E_\D (I) \quad \textnormal{and} \quad N_\R (I) := \dim \ran E_\R (I),
\end{equation}
where $E_\D$ and $E_\R$ denote the spectral measures of $- \DDV$ and $- \DRV$, respectively, that is, $N_\D (I)$ and $N_\R(I)$ is the number of eigenvalues of $- \DDV$ and $- \DRV$, respectively, in $I$, counted with multiplicities. Recall from Proposition~\ref{prop:ess_specs_coincide} that the essential spectra of  $-\DDV$ and $-\DRV$ coincide and let 
\begin{equation}\label{mmm}
 M := \min\sigma_{\rm ess}(-\DDV) = \min\sigma_{\rm ess}(-\DRV).
\end{equation}
We then denote by
\begin{equation*}
 \lambda^\D_1 \leq \lambda^\D_2 \leq \dots < M
\end{equation*}
and
\begin{equation*}
 \lambda^\R_1 \leq \lambda^\R_2 \leq \dots < M
\end{equation*}
the discrete eigenvalues counted with multiplicities below the minimum of the essential spectrum of $-\DDV$ and $-\DRV$, respectively. Note that it follows immediately from the min-max principle for the sesquilinear forms $\aDV$ and $\aRV$ that 
\begin{align*}
 N_\R \bigl((-\infty, \mu] \bigr) \geq N_\D \bigl((-\infty, \mu]\bigr), \quad \mu < M,
\end{align*}
and that if $- \DDV$ has (at least) $l$ eigenvalues in $(- \infty, M)$ then the same holds for $- \DRV$ and
\begin{align*}
 \lambda_k^\R \leq \lambda_k^\D \qquad \text{for all}\,\,\,\,k\in\{1,\dots,l\}.
\end{align*}
The following result shows that these observations can be strengthened.
    
\begin{theorem}\label{thm:filonov_unbeschraenkt}
Let $-\DDV$ and $-\DRV$ be the Schr\"{o}dinger operators with Dirichlet and Robin boundary conditions in~\eqref{dd} and~\eqref{rr}, respectively, let $M$ be given in~\eqref{mmm}, and let $N_\D$ and $N_\R$ be the corresponding eigenvalue counting functions defined in~\eqref{nnnm}. Then for each $\mu < M$ the inequality
\begin{equation}\label{oja}
 N_\R \bigl((-\infty, \mu)\bigr) \geq N_\D \bigl((-\infty, \mu]\bigr)
\end{equation}
holds. In particular, if there exist $l$ eigenvalues of $-\DDV$ in $(-\infty,M)$ then the strict inequality 
\begin{equation}\label{bitteschoen}
 \lambda^\R_k < \lambda^\D_k
\end{equation}
holds for all $k\in\{1,\dots,l\}$.
\end{theorem}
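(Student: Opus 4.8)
The plan is to prove the counting inequality~\eqref{oja} first and then read off~\eqref{bitteschoen} from it. For the reduction, fix $k\in\{1,\dots,l\}$ and apply~\eqref{oja} with $\mu=\lambda_k^\D$, which is admissible since $\lambda_k^\D<M$. Because $\lambda_1^\D,\dots,\lambda_k^\D\leq\lambda_k^\D$ we have $N_\D\big((-\infty,\lambda_k^\D]\big)\geq k$, so~\eqref{oja} gives $N_\R\big((-\infty,\lambda_k^\D)\big)\geq k$; that is, at least $k$ eigenvalues of $-\DRV$ lie strictly below $\lambda_k^\D$, which is exactly $\lambda_k^\R<\lambda_k^\D$. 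Thus everything reduces to~\eqref{oja}, and I note that both counting functions are finite because $\mu<M=\min\sigma_{\rm ess}$.

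To prove~\eqref{oja} I would argue by contradiction, in the spirit of~\cite{F05}. Set $n=N_\D\big((-\infty,\mu]\big)$ and let $L=\ran E_\D\big((-\infty,\mu]\big)$, an $n$-dimensional subspace of $H^1_0(\Omega)$ spanned by Dirichlet eigenfunctions with eigenvalues at most $\mu$. Two properties of $L$ are the input. First, $\aDV(u,u)\leq\mu\norm{u}_{L^2(\Omega)}^2$ for every $u\in L$, directly from the spectral theorem for $-\DDV$. Second, since $L\subset H^1_0(\Omega)\subset\dom\aRV$ and the boundary trace of any $u\in H^1_0(\Omega)$ vanishes, the Robin boundary term in $\aRV$ drops out, so $\aRV(u,u)=\aDV(u,u)$ on $L$. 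Combining the two, $\aRV(u,u)\leq\mu\norm{u}_{L^2(\Omega)}^2$ for all $u\in L$.

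Now assume, contrary to~\eqref{oja}, that $N_\R\big((-\infty,\mu)\big)<n$. The orthogonal projection $E_\R\big((-\infty,\mu)\big)$ has rank $N_\R\big((-\infty,\mu)\big)<n=\dim L$, so its restriction to $L$ has nontrivial kernel: there is a nonzero $u\in L$ with $E_\R\big((-\infty,\mu)\big)u=0$. For such $u$ the spectral measure of $-\DRV$ associated with $u$ is supported in $[\mu,\infty)$, whence $\aRV(u,u)\geq\mu\norm{u}_{L^2(\Omega)}^2$. Together with the bound from the previous paragraph this forces $\aRV(u,u)=\mu\norm{u}_{L^2(\Omega)}^2$. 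Since this measure is supported in $[\mu,\infty)$ and its mean equals $\mu$, it must be concentrated at the single point $\mu$, i.e.\ $u\in\ran E_\R(\{\mu\})=\ker(-\DRV-\mu)$. But $u$ also lies in $L\subset H^1_0(\Omega)$, so $u\in H^1_0(\Omega)\cap\ker(-\DRV-\mu)$, which is $\{0\}$ by Lemma~\ref{lemma:filonov_hilfslemma}, contradicting $u\neq0$.

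The heart of the argument, and the only step beyond routine spectral calculus, is the final equality discussion: extracting from the dimension count a nonzero function that is simultaneously admissible for the Dirichlet form and pushed to the very bottom of the Robin spectrum, and then upgrading \enquote{achieves the Rayleigh quotient $\mu$} to \enquote{is a genuine $\mu$-eigenfunction of $-\DRV$}. This is exactly where Lemma~\ref{lemma:filonov_hilfslemma}, and through it the unique continuation principle, is indispensable: ruling out a Robin eigenfunction with vanishing Dirichlet trace is what closes the contradiction, and it is precisely this ingredient that distinguishes the exterior-domain setting from Filonov's bounded-domain computation.
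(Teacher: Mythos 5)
Your argument is correct, and it reaches \eqref{oja} by a different mechanism than the paper. The paper fixes a test subspace $F\subset H_0^1(\Omega)$ realizing $N_\D\bigl((-\infty,\mu]\bigr)$ in the min--max principle, enlarges it to the direct sum $F\dotplus\ker(-\DRV-\mu)$ (directness being exactly Lemma~\ref{lemma:filonov_hilfslemma}), verifies via a cross-term computation that $\aRV(w,w)\leq\mu\norm{w}_{L^2(\Omega)}^2$ on the enlarged space, and concludes $N_\R\bigl((-\infty,\mu]\bigr)\geq N_\D\bigl((-\infty,\mu]\bigr)+\dim\ker(-\DRV-\mu)$, from which \eqref{oja} follows by subtraction. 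You instead argue by contradiction with a rank count on the spectral projection $E_\R\bigl((-\infty,\mu)\bigr)$: a nonzero $u\in\ran E_\D\bigl((-\infty,\mu]\bigr)$ annihilated by that projection has Robin spectral measure supported in $[\mu,\infty)$, and the equality case of the resulting two-sided bound $\aRV(u,u)=\mu\norm{u}_{L^2(\Omega)}^2$ forces the measure to concentrate at $\mu$, i.e.\ $u\in\ker(-\DRV-\mu)\cap H_0^1(\Omega)$, which Lemma~\ref{lemma:filonov_hilfslemma} excludes. Both proofs rest on the same two ingredients --- the identity $\aRV(u,u)=\aDV(u,u)$ for $u\in H_0^1(\Omega)$ and the unique continuation lemma --- but your version trades the direct-sum construction and the cross-term estimate for an equality-case analysis of the spectral measure (which is legitimate since $\mu<M$ makes both counting functions finite and puts $\ran E_\D\bigl((-\infty,\mu]\bigr)$ inside $\dom\aRV$), while the paper's version yields the slightly more explicit intermediate statement that $N_\R\bigl((-\infty,\mu]\bigr)$ exceeds $N_\D\bigl((-\infty,\mu]\bigr)$ by at least $\dim\ker(-\DRV-\mu)$. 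Your reduction of \eqref{bitteschoen} to \eqref{oja} coincides with the paper's.
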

    
\begin{proof}
 Let $\mu < M$ and recall that by the min-max-principle (see, e.g. \cite{W74}) one has
\begin{equation*}
 N_\D \bigl((-\infty, \mu]\bigr) = \max \bigl\{\hspace{-0.2mm}\dim L:L \subset \dom \aDV \textrm{ subspace, } \aDV(u,u) \leq \mu\norm{u}^2_{L^2(\Omega)}, u\in L \bigr\}
\end{equation*}
and
\begin{equation*}
 N_\R \bigl((-\infty, \mu]\bigr) = \max \bigl\{\hspace{-0.2mm}\dim L:L \subset \dom \aRV \textrm{ subspace, } \aRV(u,u) \leq \mu\norm{u}^2_{L^2(\Omega)}, u\in L \bigr\}.
\end{equation*}
Let $F$ be a subspace of $\dom \aDV=H^1_0(\Omega)$ such that $\dim F = N_\D((-\infty, \mu])$ and
\begin{equation}\label{f}
 \aDV(u,u) \leq \mu\norm{u}^2_{L^2(\Omega)} \quad \textrm{for all } u\in F.
\end{equation}
For $u\in F$ and $v\in\ker(-\DNV-\mu)$ we obtain with the help of the relations~\eqref{eq:darstellungDirichlet} and~\eqref{eq:darstellungRobin}
\begin{equation}\label{eq:filonov_unbeschraenkt_1}
\begin{split}
 \aRV(u+v,u+v) & = \aRV(u,u)+\aRV(v,v)+2\Re\,\aRV(v,u) \\
 & = \aDV(u,u)+(-\DRV v,v)_{L^2(\Omega)}+2\Re\,(-\DRV v,u)_{L^2(\Omega)} \\
 & \leq \mu\norm{u}^2_{L^2(\Omega)} +\mu\norm{v}^2_{L^2(\Omega)}+2\mu\,\Re\,(v,u)_{L^2(\Omega)} \\
 & = \mu\norm{u+v}_{L^2 (\Omega)}^2,
\end{split}
\end{equation}
where the estimate \eqref{f} was used in the third step. As $F \subset H^1_0(\Omega)$ we conclude from Lemma \ref{lemma:filonov_hilfslemma} that the sum $F \dotplus \ker(-\DRV - \mu)$ is direct. Hence it follows from \eqref{eq:filonov_unbeschraenkt_1} that
\begin{equation*}
\begin{aligned}
 N_\R \bigl((-\infty, \mu]\bigr) &\geq \dim(F) + \dim\ker\left( -\DRV-\mu \right)\\
 &= N_\D\bigl((-\infty, \mu]\bigr) + \dim\ker\left( -\DRV-\mu \right)
\end{aligned}
\end{equation*}
and this yields
\begin{equation*}
 N_\R \bigl((-\infty, \mu)\bigr) = N_\R \bigl((-\infty, \mu]\bigr) - \dim\ker\left( -\DRV-\mu \right) \geq N_\D\bigl((-\infty, \mu]\bigr),
\end{equation*}
which is \eqref{oja}. Finally, if there exist $l$ eigenvalues of the operator $-\DDV$ in $(-\infty,M)$ and $k \in \{1, \dots, l\}$ is chosen arbitrarily then~\eqref{oja} with $\mu= \lambda^\D_k$ shows $\lambda_k^\R < \lambda_k^\D$, which proves~\eqref{bitteschoen}.
\end{proof}

We immediately obtain the following corollary for the Neumann operator $- \DNV$. Here for any interval $I \subset \RR$ we write
\begin{equation}\label{nnnm2}
 N_\N (I) := \dim \ran E_\N (I),
\end{equation}
where $E_\N$ is the spectral measure of $- \DNV$. As in \eqref{mmm} we have
\begin{equation}\label{mmmm}
 M = \min\sigma_{\rm ess}(-\DDV) = \min\sigma_{\rm ess}(-\DNV)
\end{equation}
and we denote by
\begin{equation*}
 \lambda^\N_1 \leq \lambda^\N_2 \leq \dots < M
\end{equation*}
the discrete eigenvalues of $-\DNV$ below $M$, counted with multiplicities. 

\begin{corollary}\label{cor:Neumann}
Let $-\DDV$ and $-\DNV$ be the Schr\"{o}dinger operators with Dirichlet and Neumann boundary conditions in~\eqref{dd} and~\eqref{nn}, respectively, let $M$ be given in~\eqref{mmmm}, and let $N_\D$ and $N_\N$ be the corresponding eigenvalue counting functions defined in~\eqref{nnnm} and~\eqref{nnnm2}. Then for each $\mu < M$ the inequality
\begin{equation}
 N_\N\bigl((-\infty, \mu)\bigr) \geq N_\D\bigl((-\infty, \mu]\bigr)
\end{equation}
holds. In particular, if there exist $l$ eigenvalues of $-\DDV$ in $(-\infty,M)$ then the strict inequality 
\begin{equation}
 \lambda^\N_k < \lambda^\D_k
\end{equation}
holds for all $k\in\{1,\dots,l\}$.
\end{corollary}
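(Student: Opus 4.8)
The plan is to obtain the corollary directly as the special case $\omega=\partial\Omega$ and $\alpha=0$ of Theorem~\ref{thm:filonov_unbeschraenkt}, so that essentially no new work is required. First I would invoke the observation already recorded in Section~\ref{sec:preliminaries}: for this particular choice the boundary term $\alpha(u|_{\partial\Omega},v|_{\partial\Omega})_{L^2(\partial\Omega)}$ vanishes and the constraint $u|_{\omega'}=0$ becomes void, so the Robin form $\aRV$ coincides with the Neumann form $\aNV$, and both have form domain $H^1(\Omega)$. Since the two closed, semibounded forms agree, so do the selfadjoint operators they generate; that is, $-\DRV=-\DNV$ with the domain described in~\eqref{nn}.

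Next I would transfer the spectral data. Because $-\DRV=-\DNV$, their spectral measures coincide, $E_\R=E_\N$, and hence the counting functions agree, $N_\R(I)=N_\N(I)$ for every admissible interval $I$, while the discrete eigenvalues satisfy $\lambda_k^\R=\lambda_k^\N$ for all $k$. It remains only to match the threshold $M$: by Proposition~\ref{prop:ess_specs_coincide} (whose hypotheses cover the choice $\omega=\partial\Omega$, $\alpha=0$) the essential spectra of $-\DDV$ and $-\DNV$ coincide, so the value $M$ in~\eqref{mmmm} is the same as the one in~\eqref{mmm}.

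With these identifications in hand I would simply apply Theorem~\ref{thm:filonov_unbeschraenkt}. Its first conclusion reads $N_\R((-\infty,\mu))\geq N_\D((-\infty,\mu])$ for each $\mu<M$; rewriting $N_\R=N_\N$ gives the asserted inequality for the Neumann counting function. Likewise the strict eigenvalue inequality $\lambda_k^\R<\lambda_k^\D$ for $k\in\{1,\dots,l\}$ becomes $\lambda_k^\N<\lambda_k^\D$ after the substitution $\lambda_k^\R=\lambda_k^\N$. Since there is no genuine difficulty here, the only step meriting attention is the clean identification $-\DRV=-\DNV$ at the level of both forms and operator domains; once that is secured, the corollary is immediate and requires no adaptation of the argument underlying Theorem~\ref{thm:filonov_unbeschraenkt}.
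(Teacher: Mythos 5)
Your proposal is correct and matches the paper exactly: the paper states the corollary as an immediate consequence of Theorem~\ref{thm:filonov_unbeschraenkt} in the special case $\omega=\partial\Omega$, $\alpha=0$, having already recorded in Section~\ref{sec:preliminaries} that for this choice $\aRV=\aNV$ and $-\DRV=-\DNV$. No further comment is needed.
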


In the next corollary we turn to the special case that the function $V$ belongs to $L^\infty (\Omega) \cap L^p (\Omega)$ for an appropriate $p$ and satisfies the growth condition 
\begin{align}\label{eq:growth}
 V(x) \leq -\alpha|x|^{-2+\varepsilon} \quad \textrm{ for } |x| > R_0
\end{align}
for some $R_0 > 0, \alpha > 0$ and $\varepsilon > 0$. In this case it can be shown as in~\cite[Example~6 in Section~XIII.4 and Problem~41]{RS78} and~\cite[Theorem XIII.6]{RS78} that the essential spectra of $- \DDV$ and $- \DRV$ equal $[0, \infty)$ and that both operators possess infinitely many negative eigenvalues. Therefore Theorem~\ref{thm:filonov_unbeschraenkt} allows the following conclusion.
    
\begin{corollary}\label{cor:slowDecay}
 Let $V\in L^\infty(\Omega)\cap L^p(\Omega)$ with $p\geq \max\{d/2,2\}$ if $d\not=4$ and $p>2$ if $d=4$, and assume that there exist constants $R_0 > 0, \alpha > 0$ and $\varepsilon > 0$ such that~\eqref{eq:growth} is satisfied. Then there exist infinitely many discrete eigenvalues of $-\DDV$ and $-\DRV$ below their essential spectrum $\sigma_{\rm ess}(-\DDV) = \sigma_{\rm ess}(-\DRV) = [0, \infty)$ and the strict inequality
\begin{equation*}
 \lambda^\R_k < \lambda^\D_k
\end{equation*}
holds for all $k \in \NN$.
\end{corollary}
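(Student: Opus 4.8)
The plan is to combine the abstract eigenvalue inequality of Theorem~\ref{thm:filonov_unbeschraenkt} with two spectral facts about the operators $-\DDV$ and $-\DRV$: that their common essential spectrum equals $[0,\infty)$, and that $-\DDV$ already possesses infinitely many eigenvalues below it. Once both facts are secured, the strict inequality for every $k\in\NN$ follows by letting the number of eigenvalues in Theorem~\ref{thm:filonov_unbeschraenkt} grow without bound.

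First I would identify the essential spectrum. For the free Dirichlet operator (that is, $V\equiv 0$) on the exterior domain $\Omega$ it is classical that $\sigma_{\rm ess}(-\DD)=[0,\infty)$. The role of the integrability hypothesis $V\in L^p(\Omega)$ with $p\geq\max\{d/2,2\}$ (respectively $p>2$ when $d=4$) is precisely to guarantee that multiplication by $V$ is relatively compact with respect to $-\DD$; this is exactly what the $L^p$-criteria in~\cite[Section~XIII.4]{RS78} provide, the borderline case $d=4$ forcing the strict exponent. Relative compactness leaves the essential spectrum unchanged, so $\sigma_{\rm ess}(-\DDV)=[0,\infty)$, and Proposition~\ref{prop:ess_specs_coincide} then gives $\sigma_{\rm ess}(-\DRV)=\sigma_{\rm ess}(-\DDV)=[0,\infty)$. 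In particular $M=0$ in the notation of~\eqref{mmm}.

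Next I would produce infinitely many Dirichlet eigenvalues below $0$ using the long-range attractivity~\eqref{eq:growth}. By the min-max principle it suffices to exhibit, for every $n\in\NN$, an $n$-dimensional subspace of $H^1_0(\Omega)$ on which $\aDV$ is strictly negative. To this end I would fix a bump function and place rescaled copies of it on a sequence of disjoint annuli $\{R_j<|x|<R_{j+1}\}$ receding to infinity, so that the resulting trial functions are mutually orthogonal and, being supported far from $\partial\Omega$, lie in $H^1_0(\Omega)$. A scaling computation shows that on an annulus of radius $\sim r$ the kinetic term $\norm{\nabla u}_{L^2(\Omega)}^{2}$ behaves like $r^{-2}\norm{u}_{L^2(\Omega)}^{2}$, whereas~\eqref{eq:growth} bounds the potential term by $\int V|u|^2\leq -\alpha\, r^{-2+\varepsilon}\norm{u}_{L^2(\Omega)}^{2}$; hence for $r$ large the potential dominates and $\aDV(u,u)<0$. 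This is exactly the mechanism yielding infinitely many negative eigenvalues in~\cite[Theorem~XIII.6]{RS78}, transferred to the exterior-domain setting. Consequently $-\DDV$ has at least $n$ eigenvalues in $(-\infty,0)$ for every $n$, hence infinitely many; and since the Robin eigenvalues never exceed the Dirichlet ones (as noted before Theorem~\ref{thm:filonov_unbeschraenkt}), the same is true for $-\DRV$.

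Finally I would feed this into Theorem~\ref{thm:filonov_unbeschraenkt}. Fix $k\in\NN$. By the previous step $-\DDV$ possesses at least $l:=k$ eigenvalues below $M=0$, so Theorem~\ref{thm:filonov_unbeschraenkt} yields $\lambda^\R_k<\lambda^\D_k$; as $k$ was arbitrary, the strict inequality holds for all $k\in\NN$. The main obstacle is not this final assembly, which is short, but the two cited ingredients: verifying that the stated $L^p$-thresholds genuinely give relative compactness (including the excluded borderline at $d=4$), and carrying out the trial-function construction cleanly on the exterior domain, where the annuli must be taken far enough out both to avoid $\partial\Omega$ and to respect the Dirichlet condition.
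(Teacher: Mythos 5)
Your proposal is correct and follows the same route as the paper, which likewise obtains $\sigma_{\rm ess}(-\DDV)=\sigma_{\rm ess}(-\DRV)=[0,\infty)$ and the existence of infinitely many negative eigenvalues from the cited results in~\cite{RS78} (together with Proposition~\ref{prop:ess_specs_coincide}) and then applies Theorem~\ref{thm:filonov_unbeschraenkt} with $\mu=\lambda_k^\D$ for each $k$. The paper states these two ingredients without proof, so your sketch of the relative-compactness argument and the annular trial functions only fills in details the authors delegate to the references.
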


\begin{remark}
The assumption in this section that $\Omega$ is connected can be dropped. In fact, Theorem~\ref{thm:filonov_unbeschraenkt} and its proof remain valid if each connected component $\Lambda$ of $\Omega$ satisfies $\partial \Lambda \cap \omega \neq \emptyset$. In particular, Corollary~\ref{cor:Neumann} remains true also if $\Omega$ is not connected.
\end{remark}

\section{A remark on eigenvalue inequalities for elliptic operators with varying coefficients}\label{sec:elliptic}
    
In this section we turn to the related subject of eigenvalue inequalities for pairs of elliptic operators with different coefficients and a fixed boundary condition. 
For simplicity we restrict ourselves to a Dirichlet boundary condition; similar results can be proved for Neumann, Robin or mixed boundary conditions as well. 
In this section we require only that $\Omega \subset \RR^d$, $d \geq 2$, is a nonempty, open, connected set, without assuming any regularity or compactness of 
the boundary. Also the case $\Omega = \RR^d$ is included. We make use of the space $H_0^1 (\Omega)$, which is defined to be the closure of 
$C_0^\infty (\Omega)$ in $H^1 (\Omega)$; if the boundary of $\Omega$ is sufficiently smooth, e.g., Lipschitz, then $H_0^1 (\Omega)$ coincides with 
the kernel of the trace operator; cf.~\eqref{eq:H01}.

Let $\cL_1, \cL_2$ be second order differential expressions on $\Omega$ of the form
\begin{align*}
 \cL_i = - \sum_{j, k = 1}^d \partial_j a_{jk, i} \partial_k + a_i,
\end{align*}
where $a_{jk, i} : \overline \Omega \to \CC$ are bounded Lipschitz functions and $a_i : \Omega \to \RR$ are bounded and measurable, $i = 1, 2$. The differential expressions are assumed to be formally symmetric, i.e., $a_{jk, i} (x) = \overline{a_{kj, i} (x)}$ for all $x \in \overline{\Omega}$, $i = 1, 2$, and uniformly elliptic, i.e., there exist $E_i > 0$ with
\begin{align*}
% \label{eq:elliptic}
 \sum_{j, k = 1}^d a_{jk, i} (x) \xi_j \xi_k \geq E_i \sum_{k = 1}^d \xi_k^2, \quad x \in \overline{\Omega}, \quad \xi = (\xi_1, \dots, \xi_d)^\top \in \RR^d, \quad i = 1, 2.
\end{align*}
The selfadjoint Dirichlet operators associated with $\cL_i$ in $L^2 (\Omega)$ are given by
\begin{align}\label{eq:ADirichlet}
 A_i u = \cL_i u, \quad \dom A_i = \left\{ u \in H_0^1 (\Omega) : \cL_i u \in L^2 (\Omega) \right\}, \qquad i = 1, 2.
\end{align}
They correspond to the densely defined, symmetric, semibounded, closed sesquilinear forms
\begin{align*}
 \fa_i (u, v) = \sum_{j, k = 1}^d \int_\Omega a_{jk, i} \partial_k u \,\overline{\partial_j v} d x + \int_\Omega a_i \,u \,\overline v d x, \quad \dom \fa_i = H_0^1 (\Omega), \quad i = 1, 2,
\end{align*}
that is,
\begin{align}\label{abcd}
 \fa_i (u, v) = (A_i u, v)_{L^2 (\Omega)}, \quad u \in \dom A_i,\,\,\, v \in H_0^1 (\Omega), \quad i = 1, 2.
\end{align}
    
In the following we focus on the case that the infima of the essential spectra of $A_1$ and $A_2$ coincide. For instance, this is the case if the coefficients of the difference $\cL_2 - \cL_1$ are close to zero outside a compact set in an appropriate sense. If $\Omega$ is bounded or, more generally, has finite Lebesgue measure, then the essential spectra of both operators are empty. We define
\begin{align}\label{eq:Mneu}
 M := \inf \sigma_{\rm ess} (A_1) = \inf \sigma_{\rm ess} (A_2),
\end{align}
including the possibility $M = + \infty$ if the essential spectra are empty. Moreover, we assume that $\cL_1$ and $\cL_2$ are ordered in the sense that
\begin{align}\label{eq:order}
 \sum_{j, k = 1}^d a_{jk, 1} (x) \xi_j \overline{\xi_k} \leq \sum_{j, k = 1}^d a_{jk, 2} (x) \xi_j \overline{\xi_k}, \quad x \in \overline \Omega, \,\,\xi = (\xi_1, \dots, \xi_d)^\top \in \RR^d,
\end{align}
(i.e., the matrix $(a_{jk, 2} (x) - a_{jk, 1} (x))_{j, k}$ is nonnegative for all $x \in \overline \Omega$), and
\begin{align}\label{eq:order2}
 a_1 (x) \leq a_2 (x), \quad x \in \Omega.
\end{align}
These two conditions immediately imply 
\begin{align}\label{eq:formIneq}
 \fa_1 (u, u) \leq \fa_2 (u, u), \quad u \in H_0^1 (\Omega).
\end{align}
In particular, if $A_2$ possesses at least $l$ eigenvalues in $(- \infty, M)$ then the same holds for $A_1$ and 
\begin{align}\label{eq:ineqL1L2}
 \lambda_k (A_1) \leq \lambda_k (A_2), \quad k \in \{1, \dots, l \},
\end{align}
where $\lambda_1 (A_i) \leq \lambda_2 (A_i) \leq \dots$ denote the eigenvalues of $A_i$ in $(- \infty, M)$, counted with multiplicities, $i = 1, 2$. The following observation shows that the inequality~\eqref{eq:ineqL1L2} is strict whenever the coefficients of $\cL_1$ and $\cL_2$ differ sufficiently strongly. For each interval $I \subset (- \infty, M)$ we denote by $N_i (I)$ the number of eigenvalues of $A_i$ in $I$, counted with multiplicities, $i = 1, 2$.

\begin{theorem}\label{thm:A1A2}
Assume that $\inf \sigma_{\rm ess} (A_1) = \inf \sigma_{\rm ess} (A_2)$, let $M$ be given in~\eqref{eq:Mneu} and let the assumptions~\eqref{eq:order}--\eqref{eq:order2} be satisfied. Moreover, assume that there exists an open ball $\cO \subset \Omega$ such that at least one of the following conditions is satisfied:
\begin{enumerate}
 \item[(a)] $a_1 (x) < a_2 (x)$ for all $x \in \cO$ or
 \item[(b)] the matrix $(a_{jk, 2} (x) - a_{jk, 1} (x))_{j, k}$ is invertible for all $x \in \cO$.
\end{enumerate}
Then for all $\mu < M$ the inequality
\begin{align}\label{eq:countingFunctions}
 N_1 \big( (- \infty, \mu) \big) \geq N_2 \big( (- \infty, \mu] \big)
\end{align}
holds. In particular, if there exist $l$ eigenvalues of $A_2$ in $(- \infty, M)$ then 
\begin{align*}
 \lambda_k (A_1) < \lambda_k (A_2)
\end{align*}
holds for all $k\in\{1,\dots,l\}$.
\end{theorem}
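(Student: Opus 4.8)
The plan is to transfer the Filonov-type argument from the proof of Theorem~\ref{thm:filonov_unbeschraenkt}, now with $A_2$ in the role of the Dirichlet operator and $A_1$ in the role of the Robin operator. Fix $\mu < M$ and, using the min-max principle, choose a subspace $F \subseteq \dom\fa_2 = H_0^1(\Omega)$ with $\dim F = N_2\big((-\infty,\mu]\big)$ and $\fa_2(u,u) \le \mu\norm{u}_{L^2(\Omega)}^2$ for all $u \in F$. For $u \in F$ and $v \in \ker(A_1 - \mu)$ I would compute, exactly as in~\eqref{eq:filonov_unbeschraenkt_1}, using $\fa_1(u,u) \le \fa_2(u,u)$ from~\eqref{eq:formIneq} together with $\fa_1(v,v) = \mu\norm{v}_{L^2(\Omega)}^2$ and $\fa_1(v,u) = \mu(v,u)_{L^2(\Omega)}$ (both from~\eqref{abcd}), that
\begin{equation*}
 \fa_1(u+v,u+v) \le \mu\norm{u+v}_{L^2(\Omega)}^2.
\end{equation*}
Provided the sum $F \dotplus \ker(A_1-\mu)$ is direct, the min-max principle for $\fa_1$ then yields $N_1\big((-\infty,\mu]\big) \ge N_2\big((-\infty,\mu]\big) + \dim\ker(A_1-\mu)$, and subtracting $\dim\ker(A_1-\mu)$ gives~\eqref{eq:countingFunctions}; the strict eigenvalue inequality follows by taking $\mu = \lambda_k(A_2)$, just as before.

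Everything thus reduces to the analogue of Lemma~\ref{lemma:filonov_hilfslemma}, namely that every $w \in F \cap \ker(A_1-\mu)$ vanishes. Here I would exploit the ordering. Since $w \in F$ gives $\fa_2(w,w) \le \mu\norm{w}_{L^2(\Omega)}^2$ while $A_1 w = \mu w$ gives $\fa_1(w,w) = \mu\norm{w}_{L^2(\Omega)}^2$, the form inequality~\eqref{eq:formIneq} forces $\fa_1(w,w) = \fa_2(w,w)$, i.e.
\begin{equation*}
 \sum_{j,k=1}^d \int_\Omega \big(a_{jk,2} - a_{jk,1}\big)\,\partial_k w\,\overline{\partial_j w}\,dx + \int_\Omega \big(a_2 - a_1\big)|w|^2\,dx = 0.
\end{equation*}
By~\eqref{eq:order}--\eqref{eq:order2} both summands are nonnegative and hence both vanish. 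In case~(a) the second integrand is strictly positive on $\cO$, so $w = 0$ a.e.\ on $\cO$; in case~(b) the first integrand is, on $\cO$, a positive definite quadratic form in $\nabla w$, so $\nabla w = 0$ a.e.\ on $\cO$, i.e.\ $w$ is constant on the ball $\cO$.

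In case~(a) this finishes the argument at once: $w$ solves $\cL_1 w = \mu w$ on the connected set $\Omega$ and vanishes on the open set $\cO$, so the weak unique continuation principle for second order elliptic operators---the same tool used in Lemma~\ref{lemma:filonov_hilfslemma}---gives $w \equiv 0$. The hard part is case~(b), where I only obtain that $w$ equals a constant $c$ on $\cO$ rather than that it vanishes; because of the zeroth order term, the constant $c$ is not itself a solution of $\cL_1 u = \mu u$ away from $\cO$, so $w - c$ does not satisfy the homogeneous equation and the plain unique continuation principle does not apply directly. Closing this case requires a unique continuation statement for the \emph{gradient}---that a solution which is constant on an open ball is constant on all of $\Omega$, whence $w \equiv 0$ once $w \in H_0^1(\Omega)$ (or $w \in L^2(\RR^d)$ when $\Omega = \RR^d$) is used. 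Formally one expects this by differentiating the equation and treating $(\partial_1 w,\dots,\partial_d w,w)$ as a first order elliptic system, but this is precisely the step where the mere boundedness of $a_1$ must be handled with care. Establishing this gradient unique continuation, rather than the routine form computations, is where I expect the main work of the proof to lie, and it is what genuinely separates condition~(b) from condition~(a).
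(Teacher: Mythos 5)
Your proposal follows the paper's proof verbatim in its architecture: the same maximal subspace $F\subset H_0^1(\Omega)$ with $\fa_2(u,u)\le\mu\norm{u}^2_{L^2(\Omega)}$, the same estimate $\fa_1(u+v,u+v)\le\mu\norm{u+v}^2_{L^2(\Omega)}$ via \eqref{abcd} and \eqref{eq:formIneq}, the same reduction of the directness of $F\dotplus\ker(A_1-\mu)$ to the two pointwise identities $(\cA\nabla w,\nabla w)_{\CC^d}=0$ and $(a_2-a_1)|w|^2=0$ on $\Omega$, and the same use of the weak unique continuation principle in case (a). Up to and including case (a), your argument is complete and is exactly the paper's.

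The genuine gap is case (b), which you candidly leave open: you get $w\equiv c$ on $\cO$ and stop. The paper closes this in one line --- from $w=c$ on $\cO$ it asserts that \emph{unique continuation implies $w=c$ identically on $\Omega$}, and then $c=0$ because $w\in H_0^1(\Omega)$. Since your write-up does not supply this step (nor a substitute), the theorem under hypothesis (b) remains unproved in your proposal, and that is the decisive defect. That said, your diagnosis of \emph{why} the step is delicate is accurate and in fact sharper than the paper's treatment: applying the weak UCP to $w-c$ fails because $\cL_1(w-c)-\mu(w-c)=(\mu-a_1)c$ is an inhomogeneous term not dominated by $|w-c|+|\nabla(w-c)|$, and the naked claim ``a solution constant on a ball is constant everywhere'' is false for bounded zero-order coefficients: in $\RR^3$ the $C^1$ function $w=1$ on $|x|\le 1$, $w=e^{|x|-1}/|x|$ on $|x|\ge 1$ solves $-\Delta w+a_1w=\mu w$ with $a_1=\mu$ on the unit ball and $a_1=\mu+1$ outside, yet is nonconstant. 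So closing case (b) really does require either the gradient-type unique continuation statement you describe (for divergence-form operators with Lipschitz principal part), or an argument exploiting that $\cA\nabla w=0$ and $(a_2-a_1)w=0$ hold on \emph{all} of $\Omega$ (so that, e.g., $(a_1-\mu)c=0$ a.e.\ on $\cO$, which settles everything unless $a_1\equiv\mu$ on $\cO$). You correctly located the crux; you did not resolve it, and the paper's own citation of \cite{W93} at this point is too terse to hand you the missing lemma for free.
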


\begin{proof}
Let $\mu < M$. Similar to the proof of Theorem~\ref{thm:filonov_unbeschraenkt} we can choose a subspace $F \subset H_0^1 (\Omega)$ such that $\dim F = N_2 ((-\infty, \mu])$ and
\begin{equation}\label{eq:schaetzab}
 \fa_2 (u,u) \leq \mu\norm{u}^2_{L^2(\Omega)} \quad \textrm{for all } u\in F.
\end{equation}
For $u\in F$ and $v\in\ker(A_1 - \mu)$ we obtain with the help of \eqref{abcd} and \eqref{eq:formIneq}
\begin{equation}\label{eq:filonov_unbeschraenkt_2}
\begin{split}
 \fa_1 (u+v,u+v) & = \fa_1 (u,u)+\fa_1 (v,v)+2\Re\,\fa_1 (v,u) \\
 & \leq \fa_2 (u,u) + (A_1 v,v)_{L^2(\Omega)}+2\Re\,(A_1 v,u)_{L^2(\Omega)} \\
 & \leq \mu\norm{u}^2_{L^2(\Omega)} +\mu\norm{v}^2_{L^2(\Omega)}+2\mu\,\Re\,(v,u)_{L^2(\Omega)} \\
 & = \mu\norm{u+v}_{L^2 (\Omega)}^2.
\end{split}
\end{equation}
Moreover, the sum $F \dotplus \ker(A_1 - \mu)$ is direct. Indeed, if $w \in F \cap \ker (A_1 - \mu)$ then $\fa_1 (w, w) = \mu \|w\|_{L^2 (\Omega)}^2$ and thus~\eqref{eq:formIneq} and~\eqref{eq:schaetzab} yield
\begin{align*}
 \fa_1 (w, w) = \fa_2 (w, w),
\end{align*}
that is, 
\begin{align*}
 \int_\Omega ( \cA \nabla w, \nabla w )_{\CC^d} d x + \int_\Omega (a_2 - a_1) |w|^2 d x = 0,
\end{align*}
where $\cA (x) = (a_{jk, 2} (x) - a_{jk, 1} (x))_{j,k}$ for $x \in \Omega$. Since $\cA (x)$ is a nonnegative matrix and $a_2 (x) - a_1 (x) \geq 0$ for all $x \in \Omega$ by the assumptions~\eqref{eq:order} and~\eqref{eq:order2}, it follows
\begin{align}\label{eq:beidenull}
 ( \cA \nabla w, \nabla w )_{\CC^d} = 0 \quad \text{and} \quad (a_2 - a_1) |w|^2 = 0
\end{align}
identically on $\Omega$. If the condition (a) of the theorem holds for some open ball $\cO \subset \Omega$ then the second identity in~\eqref{eq:beidenull} implies $w |_\cO = 0$ and since $\cL_1 w = \mu w$ a unique continuation principle yields $w = 0$ on $\Omega$, see, e.g.,~\cite{W93}. If the condition (b) is satisfied then the first equality in~\eqref{eq:beidenull} leads to $\nabla w = 0$ on $\cO$ so that $w = c$ identically on $\cO$ for some constant $c \in \CC$ and unique continuation implies $w = c$ identically on $\Omega$. Since $w \in H_0^1 (\Omega)$ it follows again $w = 0$ identically. Hence the sum $F \dotplus \ker (A_1 - \mu)$ is direct and from~\eqref{eq:filonov_unbeschraenkt_2} we obtain
\begin{equation*}
\begin{aligned}
 N_1 \bigl((-\infty, \mu]\bigr) &\geq \dim(F) + \dim\ker\left( A_1 - \mu \right) = N_2 \bigl((-\infty, \mu]\bigr) + \dim\ker\left( A_1 - \mu \right),
\end{aligned}
\end{equation*}
which proves~\eqref{eq:countingFunctions}.
\end{proof}

For the special case of Schr\"odinger differential operators on an exterior domain Theorem~\ref{thm:A1A2} reads as follows; cf.\ the remarks above Corollary~\ref{cor:slowDecay}.

\begin{corollary}
Let $\Omega$ be a connected open set which is the exterior of a bounded domain or equals $\RR^d$. Furthermore, let $V_1, V_2 \in L^\infty(\Omega)\cap L^p(\Omega)$ with $p\geq \max\{d/2,2\}$ if $d\not=4$ and $p>2$ if $d=4$ be real valued functions and let $A_1$ and $A_2$ denote the selfadjoint Dirichlet operators corresponding to the differential expressions $\cL_1 = - \Delta + V_1$ and $\cL_2 = - \Delta + V_2$ as in~\eqref{eq:ADirichlet}. Assume that $V_1 \leq V_2$ on $\Omega$ and that there exists an open ball $\cO \subset \Omega$ such that $V_1 < V_2$ on $\cO$. Then for all $\mu < M$ the inequality
\begin{align*}
 N_1 \big( (- \infty, \mu) \big) \geq N_2 \big( (- \infty, \mu] \big)
\end{align*}
holds. In particular, if there exist $l$ eigenvalues of $A_2$ in $(- \infty, M)$ then 
\begin{align*}
 \lambda_k (A_1) < \lambda_k (A_2)
\end{align*}
holds for all $k\in\{1,\dots,l\}$.
\end{corollary}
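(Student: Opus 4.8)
The plan is to verify the hypotheses of Theorem~\ref{thm:A1A2} and then apply it directly. Here the differential expressions are $\cL_i = -\Delta + V_i$, so that in the notation of Theorem~\ref{thm:A1A2} the second order coefficients are $a_{jk,i} = \delta_{jk}$ and the zeroth order coefficients are $a_i = V_i$, for $i = 1, 2$; the operators $A_i$ are exactly those in~\eqref{eq:ADirichlet}. Uniform ellipticity holds with $E_i = 1$, and the forms $\fa_i$ are semibounded since $V_i \in L^\infty(\Omega)$.

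First I would check the ordering assumptions. Since both expressions have the same (identity) principal part, the difference matrix $(a_{jk,2} - a_{jk,1})_{j,k}$ is identically zero; in particular~\eqref{eq:order} holds trivially as an equality, while condition (b) of Theorem~\ref{thm:A1A2} can never be satisfied. Thus only condition (a) is available, and it is exactly the hypothesis $V_1 < V_2$ on the ball $\cO$; likewise~\eqref{eq:order2} is the assumption $V_1 \leq V_2$ on $\Omega$.

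The remaining and only nontrivial point is to confirm that $\inf \sigma_{\rm ess}(A_1) = \inf \sigma_{\rm ess}(A_2)$. Under the integrability conditions $V_i \in L^\infty(\Omega) \cap L^p(\Omega)$ with $p \geq \max\{d/2, 2\}$ (and $p > 2$ if $d = 4$) the potentials decay at infinity strongly enough to be relatively compact perturbations of the Dirichlet Laplacian $-\Delta_\D$ on $\Omega$; cf.\ the discussion preceding Corollary~\ref{cor:slowDecay} and~\cite[Section~XIII.4]{RS78}. Consequently $\sigma_{\rm ess}(A_1) = \sigma_{\rm ess}(A_2) = [0, \infty)$, so that both infima equal $0$ and in particular coincide, giving $M = 0$.

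Having verified all hypotheses, I would invoke Theorem~\ref{thm:A1A2} with condition (a) to obtain the counting function inequality~\eqref{eq:countingFunctions} and, as a consequence, the strict eigenvalue inequalities $\lambda_k(A_1) < \lambda_k(A_2)$ for $k \in \{1, \dots, l\}$. The main obstacle is the relative compactness step: it rests on the Schatten-class bound for $V_i(-\Delta_\D - z)^{-1}$ valid at the integrability threshold $p \geq \max\{d/2,2\}$, the endpoint of which (forcing $p > 2$ rather than $p \geq 2$ when $d = 4$) is precisely what dictates the stated range of $p$.
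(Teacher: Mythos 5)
Your proposal is correct and matches the paper's intended argument: the paper states this corollary without a separate proof, as a direct application of Theorem~\ref{thm:A1A2} with $a_{jk,i}=\delta_{jk}$, $a_i=V_i$, condition (a), and the coincidence of the essential spectra (both equal to $[0,\infty)$) justified exactly as in the remarks above Corollary~\ref{cor:slowDecay} via the relative compactness results in~\cite{RS78}. Your additional observation that condition (b) is vacuous here and that the stated range of $p$ comes from the relative-compactness threshold is a correct and sensible elaboration.
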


\begin{remark}
The assumption that $\Omega$ is connected can be dropped if it is assumed that each connected component of $\Omega$ contains an 
open ball $\cO$ such that one of the conditions (a) or (b) of Theorem~\ref{thm:A1A2} holds.
\end{remark}

\end{document}